\newtheorem{thm}{Theorem}[section]
\newtheorem{pro}[thm]{Proposition}
\theoremstyle{definition}
\newtheorem{defn}[thm]{Definition}
\newtheorem{exa}[thm]{Example}
\newtheorem{rem}[thm]{Remark}
\begin{document}

\begin{center}
{\Large Multivariate $\alpha$-normal distributions}
\end{center}

\begin{center}
{\sc Krzysztof Zajkowski}\\
Faculty of Mathematics, University of Bialystok \\ 
Ciolkowskiego 1M, 15-245 Bialystok, Poland \\ 
kryza@math.uwb.edu.pl 
\end{center}

\begin{abstract}
The Weibull distribution can be obtained using a power transformation from the standard exponential distribution. In this article, we will consider a symmetrized power transformation of a random variable with the standard normal distribution. We will call its distribution the $\alpha$-{\it normal (Gaussian) distribution}. We examine  properties of this distribution in detail.
We calculate moments and consider the moment problem of $\alpha$-normal distribution. We derive the formula of its differential entropy and (exponential) Orlicz norm. 
Moreover, we define the joint distribution function of the multivariate 
$\alpha$-normal distribution as a meta-Gaussian distribution with 
$\alpha$-normal marginals. We consider also the limiting distribution as $\alpha$ tends to infinity.
\end{abstract}

{\it 2020 Mathematics Subject Classification:
Primary 60E05, 
Secondary 46E30}.

{\it Key words: normal distribution, Weibull distribution, differential entropy, sub-exponential random variables, (exponential) Orlicz  norms, copulas, meta-Gaussian distributions} 
\section{Introduction}
The Weibull distribution is one of the most important and well-known probability distributions, which has a wide range of applications (see, e.g., Rinne's extensive monograph \cite{Rin}).
The Weibull distribution can be obtained from the standard exponential distribution by power transformation
(see Johnson, Kotz and Balakrishnan \cite[Ch.8]{JKBal}), i.e., if $X$ has the exponential distribution with  parameter $1$ then 
$W_{\alpha,\lambda}:=\lambda X^{1/\alpha}$  ($\alpha,\lambda>0$) has 
two-parameter Weibull distribution with the shape parameter $\alpha$ and the scale parameter $\lambda$.
The random variable (r.v.) $W_{\alpha,\lambda}$ has $\alpha$-exponential tail decay and belongs to the corresponding exponential 
type Orlicz space (see Section \ref{secOrlicz}).

In this article, we propose applying a power transformation of the form $\operatorname{sgn}(x)|x|^{2/\alpha}$ to a random variable $G$ with the standard normal distribution to obtain some equivalent to the Weibull random variable. This variable will have the same order of tail decay and will belong to the same Orlicz space as the variable $W_{\alpha,\lambda}$. 

A multivariate Weibull distribution can in general be obtained from multivariate exponential distribution by power transformations of marginals (see Kotz, Balakrishnan and Johnson \cite[Ch. 47.4]{KBJ}, i.e., if 
$X_1,...,X_n$ have some joint exponential distribution then the power transformations 
$X_1^{1/\alpha_1},...,X_n^{1/\alpha_n}$ $(\alpha_1,...,\alpha_n>0)$  produce a multivariate distribution having Weibull marginals. There are many forms of multivariate exponential distributions. In general it depends on
copulas used (see \cite[Ch. 47]{KBJ}). In this paper, we apply the particular Gauss copula to the $\alpha$-normal marginals. We  call this probability distribution a {\it multivariate $\alpha$-normal distribution}. In other words, we define this distribution as a meta-Gaussian distribution with $\alpha$-normal marginals.
We consider also the limiting distribution as $\alpha$ tends to infinity.

Meta-Gaussian distributions with different marginals have various applications, for example, in financial mathematics (see  \cite{Li}), information theory (see \cite{lul} ), 
hydrology (see \cite{Rey}). In this paper, we propose a new notion of the meta-Gaussian distribution with 
$\alpha$-normal marginals.

\section{The $\alpha$-normal distribution}

We define a power transformation of the normal distribution, which  was announced in \cite{Zaj}.
\begin{defn}
Let $G$ be the standard normal distributed random variable and $\alpha$ be a positive number.
We denote by $G_\alpha$  the random variable $\operatorname{sgn}(G)|G|^{2/\alpha}$, where $\operatorname{sgn}(x)$ is the signum function of $x$. 
We call $G_\alpha$  the {\it model $\alpha$-normal ($\alpha$-{\it Gaussian}) random variable}.
\end{defn}
\begin{rem}
Let us emphasize that $G_2=\operatorname{sgn}(G)|G|=G$. 
$G_2$ is the standard normally distributed random variable. 
All results that we  obtain for $G_\alpha$ 
are generalizations known facts for the  normal distribution.
\end{rem}
\begin{rem}
Since the function $f_\alpha(x):=\operatorname{sgn}(x)|x|^{2/\alpha}$ is odd and $G$ is a symmetric random variable, $G_\alpha=f_\alpha(G)$ is the symmetric one and $|G_\alpha|$ has the same distribution 
as $|G|^{2/\alpha}$.
\end{rem}
\begin{rem}
Tails of the Gaussian random variable can be estimated from above in the following way
$$
\mathbb{P}(|G|\ge x)\le \exp(-x^2/2)
$$
for any $x\ge 0$; see, for instance, \cite[Prop.2.2.1]{Dudley}). Hence for $\alpha$-normal random variable we get
$$
\mathbb{P}(|G_\alpha|\ge x)=\mathbb{P}(|G|^{2/\alpha}\ge x)=\mathbb{P}(|G|\ge x^{\alpha/2})\le \exp(-x^\alpha/2).
$$
This means that $G_\alpha$ has 
the \emph{$\alpha$-sub-exponential tails decay}. 
\end{rem}
\begin{pro}
\label{cdf}
i) The distribution function (d.f.) $\Phi_\alpha$ of $G_\alpha$ is of the form
$$
\Phi_\alpha(x)=\Phi(\operatorname{sgn}(x)|x|^{\alpha/2}),
$$
where $\Phi$ is the standard normal distribution function.\\
ii) The probability density function $\varphi_\alpha$ of $G_\alpha$ is of the form
$$
\varphi_\alpha(x)=\frac{\alpha}{2\sqrt{2\pi}}|x|^{\alpha/2-1}\exp(-|x|^{\alpha}/2).
$$
\end{pro}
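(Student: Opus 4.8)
The plan is to exploit the fact that $G_\alpha = f_\alpha(G)$ is a strictly monotone transformation of the standard normal variable $G$, so that both the distribution function and the density follow from the standard change-of-variables machinery for a single random variable.

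For part (i), I would first observe that $f_\alpha(x) = \operatorname{sgn}(x)|x|^{2/\alpha}$ is continuous and strictly increasing on all of $\mathbb{R}$ (it equals $x^{2/\alpha}$ for $x>0$, $-|x|^{2/\alpha}$ for $x<0$, and $0$ at the origin), hence a bijection of $\mathbb{R}$ onto itself. I would then compute its inverse: solving $y = \operatorname{sgn}(x)|x|^{2/\alpha}$ for $x$ gives $x = \operatorname{sgn}(y)|y|^{\alpha/2}$, so $f_\alpha^{-1}(y) = \operatorname{sgn}(y)|y|^{\alpha/2}$. Because $f_\alpha$ is increasing, $\{G_\alpha \le x\} = \{G \le f_\alpha^{-1}(x)\}$, and taking probabilities yields $\Phi_\alpha(x) = \mathbb{P}(G \le \operatorname{sgn}(x)|x|^{\alpha/2}) = \Phi(\operatorname{sgn}(x)|x|^{\alpha/2})$, which is the claimed formula.

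For part (ii), I would differentiate the expression from (i). By the chain rule $\varphi_\alpha(x) = \varphi(\operatorname{sgn}(x)|x|^{\alpha/2})\cdot\frac{d}{dx}[\operatorname{sgn}(x)|x|^{\alpha/2}]$, where $\varphi$ is the standard normal density. The inner factor simplifies because $(\operatorname{sgn}(x)|x|^{\alpha/2})^2 = |x|^\alpha$, giving $\varphi(\operatorname{sgn}(x)|x|^{\alpha/2}) = \frac{1}{\sqrt{2\pi}}\exp(-|x|^\alpha/2)$. The derivative of the argument is where care is needed: treating $x>0$ and $x<0$ separately, one checks that in both regimes $\frac{d}{dx}[\operatorname{sgn}(x)|x|^{\alpha/2}] = \frac{\alpha}{2}|x|^{\alpha/2-1}$. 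Multiplying the two factors produces $\varphi_\alpha(x) = \frac{\alpha}{2\sqrt{2\pi}}|x|^{\alpha/2-1}\exp(-|x|^\alpha/2)$.

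The only delicate point is the origin. The map $f_\alpha^{-1}$ need not be differentiable at $0$ (and for $\alpha < 2$ the resulting density is unbounded near $0$), so the derivative computation is valid only for $x \ne 0$; I would note that this single point has Lebesgue measure zero and that the resulting $\varphi_\alpha$ is nonetheless integrable (since $\alpha/2 - 1 > -1$), confirming it is a genuine density. Alternatively, the same formula can be obtained in one stroke from the monotone-transformation density formula $\varphi_\alpha(y) = \varphi(f_\alpha^{-1}(y))\,|(f_\alpha^{-1})'(y)|$, which bypasses any separate treatment of the two half-lines.
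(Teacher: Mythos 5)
Your proposal is correct and follows essentially the same route as the paper: invert the strictly increasing bijection $f_\alpha$ to get $f_\alpha^{-1}(x)=\operatorname{sgn}(x)|x|^{\alpha/2}$, deduce the cdf $\Phi_\alpha(x)=\Phi(\operatorname{sgn}(x)|x|^{\alpha/2})$, and differentiate via the chain rule to obtain the density. Your additional remark about non-differentiability at the origin (and integrability of the unbounded density when $\alpha<2$) is a point the paper passes over silently, but it does not change the argument.
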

\begin{proof}
Let us observe that $f_\alpha:\mathbb{R}\mapsto\mathbb{R}$ is bijection and its inverse  is of the form
$f_\alpha^{-1}(x)=\operatorname{sgn}(x)|x|^{\alpha/2}$. Thus 
the distribution function  of the $\alpha$-Gaussian random variable $G_\alpha$, which we will denote by $\Phi_\alpha$, has the form
\begin{eqnarray*} 
\Phi_\alpha(x)&=&\mathbb{P}(G_{\alpha}\leq x)=\mathbb{P}(\operatorname{sgn}(G)|G|^{2/\alpha}\leq x)\\
\; &=& \mathbb{P}(G\leq \operatorname{sgn}(x)|x|^{\alpha/2})=\Phi(\operatorname{sgn}(x)|x|^{\alpha/2}).   
\end{eqnarray*}
For the second part, since 
$$
\frac{d}{dx}\big(\operatorname{sgn}(x)|x|^{\alpha/2}\big)=\frac{\alpha}{2}|x|^{\frac{\alpha}{2}-1},
$$
we infer that
the density function $\varphi_\alpha=\Phi^\prime_\alpha$ has the form as in  the assertion.
\end{proof}

{\bf Density function description} (The following description and the graphic were made by the student Jacek Oszczepali\'nski).


The density function of the random variable $G_{\alpha}$ is an even function. If we consider $x > 0$, the derivative of this function has the following expression
\begin{eqnarray*}
\varphi_{\alpha}'(x)
&=&\frac{-\alpha^2}{4\sqrt{2\pi}}x^{\frac{\alpha}{2}-2}\Big(x^{\alpha}-\frac{\alpha-2}{\alpha}\Big)\exp\Big(-\frac{1}{2}x^{\alpha}\Big).
\end{eqnarray*}

It is worth noting that for $0 < \alpha < 2$, the density function exhibits an infinite negative slope at $0$ (i.e., $\lim\limits_{x\to 0^+} \varphi'_{\alpha}(x) = -\infty$), and it is negative for all $x > 0$. In the case of $\alpha = 2$, the slope at $0$ is finite, and we have $\lim\limits_{x\to 0^+} \varphi_{2}'(x) = \varphi_{2}'(0) = 0$. For $2 < \alpha < 4$, the slope at $0$ is infinitely positive, and when $\alpha = 4$, we have $\lim\limits_{x\to 0^+} \varphi_{4}'(x) = 2/\sqrt{\pi}$, while $\varphi'_{\alpha}(0) = 0$ for $\alpha > 4$. In general, for $2 < \alpha$ and $x > 0$, the slope of the density function is positive up to the value $\sqrt[\alpha]{\frac{\alpha-2}{\alpha}}$, where $\varphi'_{\alpha}\Big(\sqrt[\alpha]{\frac{\alpha-2}{\alpha}}\Big)=0$, and it is negative beyond the aforementioned threshold.\\

The shape of the density function of the standard $\alpha$-normal distribution undergoes a significant transformation depending on the value of $\alpha$. Specifically, for values within the range of $0 < \alpha < 2$, the function exhibits a vertical asymptote at zero. When $\alpha$ equals $2$, the function describes the density of a standard normal distribution. However, for $\alpha > 2$, the function features a local minimum at zero with a value of zero and two maxima at $\pm\sqrt[\alpha]{\frac{\alpha-2}{\alpha}}$, depicting a distinct bimodal distribution.\\

The figure below displays graphs of $\alpha$-normal probability density functions, each with different values of the shape parameter $\alpha$. Specifically, the density functions represented by the colours red, blue, purple, and green correspond to $\alpha$ values of $1$, $2$, $3$, and $5$, respectively.\newline

\begin{figure}[H] 
\centering
\includegraphics[width=12cm]{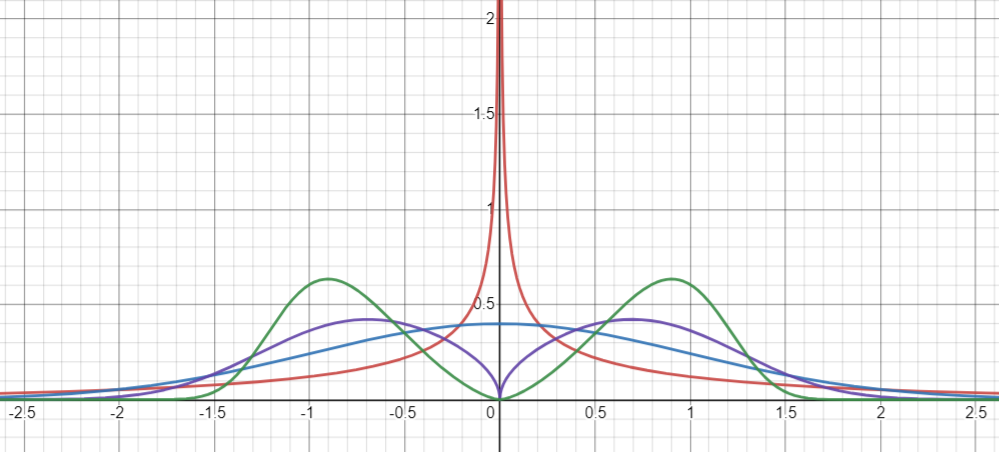} 
\caption{Density function $\varphi_{\alpha}$ depending on the value of parameter $\alpha$.}
\end{figure}
 

\begin{rem}
Starting with the random variable $\sigma G$ $(\sigma >0)$, in the same manner as in Proposition \ref{cdf}
we get that
\begin{eqnarray*} 
\Phi_{\sigma,\alpha}(x)&:=&\mathbb{P}(f_\alpha(\sigma G)\leq x)=\mathbb{P}(\sigma G\leq \operatorname{sgn}(x)|x|^{\alpha/2})\\
\; &=& \Phi\Big(\frac{\operatorname{sgn}(x)|x|^{\alpha/2}}{\sigma}\Big)   
\end{eqnarray*}
and 
$$
\varphi_{\sigma,\alpha}(x):=\Phi_{\sigma,\alpha}^\prime(x)
=\frac{\alpha}{2\sqrt{2\pi}\sigma}|x|^{\alpha/2-1}\exp\Big(-\frac{|x|^{\alpha}}{2\sigma^2}\Big).
$$
\end{rem} 
\begin{rem}
\label{Rad}
Let us observe that for each $\sigma>0$ the distribution function $\Phi_{\sigma,\alpha}=\Phi(\operatorname{sgn}(x)|x|^{\alpha/2}/\sigma)$ tends weakly to the Rademacher distribution as 
$\alpha\to\infty$. For this reason we will denote the Rademacher distribution by $\Phi_\infty$ .

\end{rem}

{\bf The moment problem of $\alpha$-normal distribution}.
Since, for $G\sim\mathcal{N}(0,1)$ and $p>0$,
$$
\mathbb{E}(|G|^{p})=
\frac{2^{p/2}}{\sqrt{\pi}}\Gamma\Big(\frac{p+1}{2}\Big),
$$
we immediately get
$$
\mathbb{E}(|G_\alpha|^{p})=\mathbb{E}(|G|^{2p/\alpha})=
\frac{2^{p/\alpha}}{\sqrt{\pi}}\Gamma\Big(\frac{p}{\alpha}+\frac{1}{2}\Big).
$$
Let us emphasize that $G_\alpha$ and its modulus both have finite integer-order moments.
It is natural to ask about the moment problem (see Stoyanov \cite[Sec.11]{Stoy} for instance). That is, let
$$
m_{\alpha,k}:=\int_{-\infty}^\infty x^k d\Phi_\alpha(x)=\int_{-\infty}^\infty x^k \varphi_\alpha(x)dx,\quad
k=0,1,2,...\;,
$$
we ask whether $\Phi_\alpha$ is uniquely determined ($M$-determinate) or indetermined ($M$-indeterminate) by the sequence of moments $(m_{\alpha,n})$.
Following Stoyanov's reasoning from \cite[Subsection 11.1]{Stoy}, we can answer this question (the necessary definitions and criteria can be found at the beginning of \cite[Section 11]{Stoy}).

Stoyanov considers the random variable $G^3$ in detail, writing that all odd powers of $N(0,1)$ can be considered in a similar way. Note that with our parametrization $G^3=G_{2/3}$ ($\alpha=2/3$) and in general $G^{2n+1}=G_{2/(2n+1)}$ ($\alpha=2/(2n+1)$). Let us emphasize that the presented reasoning is true for any $\alpha>0$. 

By using some standard integrals
($\int_0^\infty\frac{1}{1+x^2}dx=\frac{\pi}{2}$, $\int_0^\infty\frac{\ln x}{1+x^2}dx=0$ and 
$\int_0^\infty \frac{x^\alpha}{1+x^2}dx=\frac{\pi}{2}\cos\frac{\alpha\pi}{2}$, $-1<\alpha<1$)
we conclude that
\begin{eqnarray*}
\int_{-\infty}^\infty\frac{-\ln\varphi_\alpha(x)}{1+x^2}dx &=& -\ln\frac{\alpha}{2\sqrt{2\pi}}\int_{-\infty}^\infty\frac{1}{1+x^2}dx
-\Big[\frac{\alpha}{2}-1\Big]\int_{-\infty}^\infty\frac{\ln|x|}{1+x^2}dx\\
\;&\;&+\frac{1}{2}\int_{-\infty}^\infty\frac{|x|^\alpha}{1+x^2}dx<\infty\quad (0<\alpha<1).
\end{eqnarray*}
Hence, according to the Krein criterion (see ($C_3$) in \cite[Sec.11]{Stoy} for instance)
the distribution of r.v. $\operatorname{sgn}(G)|G|^{2/\alpha}$ is $M$-indeterminate 
for $0<\alpha<1$.

Now we prove that for $\alpha\ge 1$ the d.f. of $G_\alpha$ is $M$-determinate. To simplify the notation we write
$A\sim_d B$ if $\frac{1}{C(d)}A\le B \le C(d)A$, where $C(d)$ is a positive constant, which depends only on $d$.
Let us note that 
$$
m_{\alpha,2k}=\frac{2^{2k/\alpha}}{\sqrt{\pi}}\Gamma\Big(\frac{2k}{\alpha}+\frac{1}{2}\Big)
$$
and 
$$
\Gamma\Big(\frac{2k}{\alpha}+\frac{1}{2}\Big)\le \Gamma\Big(\frac{2k}{\alpha}+1\Big),
$$
for sufficiently large $k$ (e.g. $k\ge \alpha/2$).
By Stirling's formula
$$
\Gamma\Big(\frac{2k}{\alpha}+1\Big)^\frac{1}{2k}\sim_\alpha k^\frac{1}{\alpha}.
$$
Since $\sum_{k=1}^\infty k^{-1/\alpha}=\infty$ if $\alpha\ge 1$, we get that
$$
\sum_{k=1}^\infty m_{\alpha,2k}^{-1/(2k)}=\infty
$$
for $\alpha\ge 1$.
By Carleman's condition (see $(C_2)$ in \cite[Sec.11]{Stoy}) we obtain that $G_\alpha$ is $M$-determinate for 
$\alpha \ge 1$.

Moreover one can calculate that the moment generating function (m.g.f.) of $G_1=\operatorname{sgn}(G)G^2$
takes the form $\frac{1}{2}((1-2t)^{-1/2}+(1+2t)^{-1/2})$ for $t\in(-1/2,1/2)$. Thus we can deduce that $G_\alpha$ 
has \emph{light tails} for $\alpha\ge 1$ (possesses the m.g.f.).
\begin{rem}
Repeating the above reasoning for the random variable $|G_\alpha|=|G|^{2/\alpha}$ and using Krein's and Carleman's conditions  for its probability distribution $2\Phi(t^{\alpha/2})-1$ with the support $[0,\infty)$, one can calculate that $|G_\alpha|$ is $M$-indeterminate for $0<\alpha < 1/2$ and $M$-determinate for
$\alpha \ge 1/2$, although $|G_\alpha|$ has the m.g.f.  only from $\alpha \ge 1$ 
(compare the example $|G|^3=|G_{2/3}|$ in \cite[Sec.11.1]{Stoy}).
\end{rem}

\section{Comparison with the Weibull distribution}
Now we compare the distribution of $G_\alpha$ with the distributions of the Weibull($\alpha,\lambda$) random variables for some $\lambda$'s.
By definition, a random variable $X$ majorizes a random variable $Y$ in distribution, if there exists $t_0\ge 0$ such that 
$$
\mathbb{P}(|X|\ge t)\ge \mathbb{P}(|Y|\ge t),
$$
for any $t>t_0$; see, for instance, \cite[Def. 1.1.2]{BulKoz}.

\begin{pro}
The  $\alpha$-normal random variable $G_\alpha$ majorizes the Weibull($\alpha$,1) random variable and it is majorized by the Weibull($\alpha, 2^{1/\alpha}$) random variable.
\end{pro}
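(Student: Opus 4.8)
The plan is to reduce both majorization relations to a single scalar inequality for the standard Gaussian tail and then invoke, respectively, an upper and a lower Gaussian tail bound. First I would use the distributional identity $|G_\alpha|\stackrel{d}{=}|G|^{2/\alpha}$ noted earlier, so that for every $t\ge 0$
$$
\mathbb{P}(|G_\alpha|\ge t)=\mathbb{P}\big(|G|\ge t^{\alpha/2}\big)=2\big(1-\Phi(t^{\alpha/2})\big).
$$
On the Weibull side, the tail formula $\mathbb{P}(W_{\alpha,\lambda}\ge t)=\exp(-(t/\lambda)^\alpha)$ gives $\mathbb{P}(W_{\alpha,1}\ge t)=\exp(-t^\alpha)$ and $\mathbb{P}(W_{\alpha,2^{1/\alpha}}\ge t)=\exp(-t^\alpha/2)$. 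Substituting $x=t^{\alpha/2}$ (so that $t^\alpha=x^2$), both claims become statements about $1-\Phi(x)$ for large $x$: majorization by $W_{\alpha,2^{1/\alpha}}$ amounts to $2(1-\Phi(x))\le\exp(-x^2/2)$, while $G_\alpha$ majorizing $W_{\alpha,1}$ amounts to $2(1-\Phi(x))\ge\exp(-x^2)$.

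For the upper relation I would invoke the Gaussian tail bound recalled earlier, $\mathbb{P}(|G|\ge x)\le\exp(-x^2/2)$ (see \cite[Prop.2.2.1]{Dudley}), valid for every $x\ge 0$. This immediately yields $\mathbb{P}(|G_\alpha|\ge t)\le\exp(-t^\alpha/2)=\mathbb{P}(W_{\alpha,2^{1/\alpha}}\ge t)$ for all $t\ge 0$, so $G_\alpha$ is majorized by $W_{\alpha,2^{1/\alpha}}$ already with $t_0=0$.

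The majorizing direction is where the real work lies, since the excerpt supplies only an upper bound on the Gaussian tail whereas here I need a matching lower bound. I would use the standard Mills-ratio estimate $1-\Phi(x)\ge\frac{1}{\sqrt{2\pi}}\,\frac{x}{1+x^2}\exp(-x^2/2)$ for $x>0$. The target inequality $2(1-\Phi(x))\ge\exp(-x^2)$ then reduces, after cancelling the common factor $\exp(-x^2/2)$, to
$$
\frac{2}{\sqrt{2\pi}}\,\frac{x}{1+x^2}\,\exp(x^2/2)\ge 1.
$$
The left-hand side tends to $+\infty$ as $x\to\infty$ because the exponential factor overwhelms the rational one, so the inequality holds for every $x$ beyond some threshold $x_1$; pulling this back through $x=t^{\alpha/2}$ produces a value $t_0=x_1^{2/\alpha}$ past which $\mathbb{P}(|G_\alpha|\ge t)\ge\exp(-t^\alpha)=\mathbb{P}(W_{\alpha,1}\ge t)$, which is exactly the majorization assertion.

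The only genuine obstacle is thus securing the lower Gaussian tail estimate and confirming that the associated function eventually exceeds $1$; the remaining steps are pure substitution. No difficulty with uniformity in $\alpha$ is expected, because after the substitution $x=t^{\alpha/2}$ the decisive inequalities involve $x$ alone, and $t\mapsto t^{\alpha/2}$ is an increasing bijection of $[0,\infty)$ for each $\alpha>0$.
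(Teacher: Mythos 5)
Your proposal is correct and follows essentially the same route as the paper: the upper majorization via the bound $\mathbb{P}(|G|\ge x)\le\exp(-x^2/2)$ (valid for all $t\ge 0$), and the lower one by taking a Gaussian lower tail bound of the form $(\text{rational factor})\cdot\exp(-x^2/2)$ and observing that the rational factor eventually exceeds $\exp(-x^2/2)$, so the tail dominates $\exp(-x^2)$ beyond some threshold. The only difference is cosmetic: you cite the Mills-ratio bound $\frac{1}{\sqrt{2\pi}}\frac{x}{1+x^2}e^{-x^2/2}$ where the paper uses Dudley's $\frac{1}{\sqrt{2\pi}\,t}e^{-t^2/2}$ for $t\ge 1$, and both yield the same conclusion by the same asymptotic argument.
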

\begin{proof}
It is known that the tails  of the Gaussian random variable can be  estimated from above in  the following way
$$
\mathbb{P}(|G|\ge t)\le \exp(-t^2/2)
$$
for any $t\ge 0$; see, for instance, \cite[Prop.2.2.1]{Dudley}) . Hence for the $\alpha$-normal random variable we get
\begin{equation}
\label{est1}
\mathbb{P}(|G_\alpha|\ge t)=\mathbb{P}(|G|\ge t^{\alpha/2})\le \exp\big(-(t/2^{1/\alpha})^\alpha\big).
\end{equation}
Observe that the right hand side is a tail of the Weibull($\alpha, 2^{1/\alpha}$) random variable. This means that the Weibull($\alpha, 2^{1/\alpha}$) random variable majorizes the $\alpha$-normal random variable.

In the same source \cite[Prop.2.2.1]{Dudley}) one can find the following lower estimate of the tails of the Gaussian random variable
$$
\mathbb{P}(|G|\ge t)\ge\frac{1}{t}\frac{1}{\sqrt{2\pi}}\exp(-t^2/2)
$$
for $t\ge 1$.
Since $\sqrt{2\pi}t\exp(-t^2/2)$ tends to $0$ as $t\to\infty$, there exists $t_0$ such that $\sqrt{2\pi}t\exp(-t^2/2)\le 1$ for $t\ge t_0$. It gives 
$\exp(-t^2/2)\le 1/\sqrt{2\pi}t$ and, in consequence, 
$$
\mathbb{P}(|G|\ge t)\ge \frac{1}{t}\frac{1}{\sqrt{2\pi}}\exp(-t^2/2)\ge\exp(-t^2)  
$$
for $t\ge t_0$. By the above
$$
\mathbb{P}(|G_\alpha|\ge t)=\mathbb{P}(|G|^{2/\alpha}\ge t)=\mathbb{P}(|G|\ge t^{\alpha/2})\ge\exp(-t^\alpha)=\mathbb{P}(W_{\alpha,1}\ge t),
$$
for $t\ge t_0^{2/\alpha}$,
which means that the $\alpha$-normal random variable majorizes the Weibull($\alpha,1$) random variable.
\end{proof}

Although the model  $\alpha$-normal distribution is comparable to the Weibull distribution in the above sense, it is significantly different. We show it using
the entropy function. 
Recall that the differential entropy of the two-parameter Weibull distribution is given by the formula
$$
h(W_{\alpha,\lambda})=\gamma\Big(1-\frac{1}{\alpha}\Big)+\ln\Big(\frac{\lambda}{\alpha}\Big)+1,
$$
where $\gamma$ is the Euler-Mascheroni constant. For $G_\alpha$ this is distinct.
\begin{pro}
The differential entropy of $G_\alpha$ has the following form
$$
h(G_\alpha)=\Big(\frac{1}{2}-\frac{1}{\alpha}\Big)(\gamma+\ln 2)+\ln\frac{2\sqrt{2\pi}}{\alpha}+\frac{1}{2},
$$
where $\gamma$ denotes the Euler-Mascheroni constant.
\end{pro}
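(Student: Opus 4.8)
The plan is to compute the differential entropy directly from its definition
$$
h(G_\alpha)=-\int_{-\infty}^{\infty}\varphi_\alpha(x)\ln\varphi_\alpha(x)\,dx=-\mathbb{E}\big[\ln\varphi_\alpha(G_\alpha)\big],
$$
using the explicit density from Proposition \ref{cdf}. First I would take the logarithm of $\varphi_\alpha(x)=\frac{\alpha}{2\sqrt{2\pi}}|x|^{\alpha/2-1}\exp(-|x|^\alpha/2)$, obtaining
$$
\ln\varphi_\alpha(x)=\ln\frac{\alpha}{2\sqrt{2\pi}}+\Big(\frac{\alpha}{2}-1\Big)\ln|x|-\frac{|x|^\alpha}{2},
$$
so that the entropy splits into three contributions:
$$
h(G_\alpha)=\ln\frac{2\sqrt{2\pi}}{\alpha}-\Big(\frac{\alpha}{2}-1\Big)\mathbb{E}\big[\ln|G_\alpha|\big]+\frac12\mathbb{E}\big[|G_\alpha|^\alpha\big].
$$
The first term already matches the logarithmic term in the assertion, so the task reduces to evaluating the two expectations.

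Next I would evaluate $\mathbb{E}[|G_\alpha|^\alpha]$ by setting $p=\alpha$ in the moment formula $\mathbb{E}(|G_\alpha|^p)=\frac{2^{p/\alpha}}{\sqrt\pi}\Gamma\big(\frac{p}{\alpha}+\frac12\big)$ established above; since $\Gamma(3/2)=\sqrt\pi/2$ this gives exactly $\mathbb{E}[|G_\alpha|^\alpha]=1$, which produces the $+\tfrac12$ in the target formula.

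The crux is the computation of $\mathbb{E}[\ln|G_\alpha|]$. Using that $|G_\alpha|$ has the same law as $|G|^{2/\alpha}$ (see the earlier Remark), this equals $\frac{2}{\alpha}\mathbb{E}[\ln|G|]$, so it suffices to find the logarithmic moment of the standard normal variable. I would obtain it by differentiating the moment function $p\mapsto\mathbb{E}[|G|^p]=\frac{2^{p/2}}{\sqrt\pi}\Gamma\big(\frac{p+1}{2}\big)$ at $p=0$: since $\frac{d}{dp}\mathbb{E}[|G|^p]\big|_{p=0}=\mathbb{E}[\ln|G|]$ and the derivative of $\Gamma$ brings in the digamma value $\psi(1/2)=-\gamma-2\ln2$, one finds $\mathbb{E}[\ln|G|]=-\tfrac12(\gamma+\ln2)$, hence $\mathbb{E}[\ln|G_\alpha|]=-\tfrac1\alpha(\gamma+\ln2)$. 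The main obstacle is exactly this step: justifying the interchange of differentiation and expectation near $p=0$ (a dominated-convergence argument) and invoking the special value of the digamma function at $\tfrac12$.

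Finally, substituting both expectations back, the middle coefficient becomes $-\big(\frac{\alpha}{2}-1\big)\cdot\big(-\frac1\alpha\big)=\frac12-\frac1\alpha$, and collecting the three pieces yields
$$
h(G_\alpha)=\Big(\frac12-\frac1\alpha\Big)(\gamma+\ln2)+\ln\frac{2\sqrt{2\pi}}{\alpha}+\frac12,
$$
as claimed.
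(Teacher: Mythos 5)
Your proof is correct, and its skeleton is the same as the paper's: the identical three-term decomposition of $h(G_\alpha)$, the same evaluation $\mathbb{E}|G_\alpha|^\alpha=1$ (the paper notes $|G_\alpha|^\alpha=G^2$ directly; your choice $p=\alpha$ in the moment formula is the same fact), and the same reduction of the problem to the logarithmic moment. Where you genuinely differ is in computing that moment. The paper substitutes $u=x^{\alpha/2}$ to reduce $\int\ln|x|\,\varphi_\alpha(x)\,dx$ to $\int_0^\infty e^{-u^2/2}\ln u\,du$ and quotes Gradshteyn--Ryzhik 4.333 for the value $-\tfrac14(\gamma+\ln 2)\sqrt{2\pi}$. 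You instead use that $|G_\alpha|$ has the law of $|G|^{2/\alpha}$ to get $\mathbb{E}\ln|G_\alpha|=\tfrac2\alpha\,\mathbb{E}\ln|G|$, and then find $\mathbb{E}\ln|G|=-\tfrac12(\gamma+\ln 2)$ by differentiating $p\mapsto\mathbb{E}|G|^p=\tfrac{2^{p/2}}{\sqrt{\pi}}\Gamma\big(\tfrac{p+1}{2}\big)$ at $p=0$ and using $\psi(1/2)=-\gamma-2\ln 2$. The two routes are equivalent in content (the cited table entry is precisely the statement $\mathbb{E}\ln|G|=-\tfrac12(\gamma+\ln 2)$), but yours trades the table lookup for a self-contained Gamma-function derivation, at the price of one technical obligation you flag without discharging: the interchange of differentiation and expectation at $p=0$. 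That step is routine --- for $|p|\le\tfrac12$ one has $\big||G|^p\ln|G|\big|\le|\ln|G||\,\big(|G|^{1/2}+|G|^{-1/2}\big)$, which is integrable against the Gaussian density, so dominated convergence applies --- and adding these two lines would make your argument fully rigorous.
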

\begin{proof}
By the definition of  differential entropy and the form of density $\varphi_\alpha$
 of $G_\alpha$ 
we get
\begin{eqnarray}
\label{Hfa}
h(G_\alpha) & = & -\int_{-\infty}^\infty \varphi_\alpha(x)\ln \varphi_\alpha(x)dx\nonumber\\
\; & = & -\int_{-\infty}^\infty \varphi_\alpha(x)\Big[\ln\frac{\alpha}{2\sqrt{2\pi}}+\Big(\frac{\alpha}{2}-1\Big)\ln|x|-\frac{1}{2}|x|^\alpha\Big]dx\nonumber\\
\; & = & \ln\frac{2\sqrt{2\pi}}{\alpha}+\Big(1-\frac{\alpha}{2}\Big)\int_{-\infty}^\infty \ln|x| \varphi_\alpha(x)dx+\frac{1}{2}\int_{-\infty}^\infty |x|^\alpha \varphi_\alpha(x)dx.
\end{eqnarray}
Let us observe that
\begin{equation}
\label{int1}
\int_{-\infty}^\infty |x|^\alpha \varphi_\alpha(x)dx=\mathbb{E}|G_\alpha|^\alpha=\mathbb{E}G^2=1.
\end{equation}
and 
\begin{equation}
\label{int2}
\int_{-\infty}^\infty \ln|x| \varphi_\alpha(x)dx=\frac{\alpha}{2\sqrt{2\pi}}\int_{-\infty}^\infty \ln|x||x|^{\alpha/2-1}e^{-|x|^{\alpha}/2}dx.
\end{equation}
Substituting $u=x^{\alpha/2}$ ($x>0$) we obtain
\begin{equation}
\label{int3}
\int_{-\infty}^\infty \ln|x||x|^{\alpha/2-1}e^{-|x|^{\alpha}/2}dx=\frac{8}{\alpha^2}\int_0^\infty e^{-\frac{1}{2}u^2}\ln u du.
\end{equation}
By \cite[4.333]{Table} we have that 
\begin{equation}
\label{int4}
\int_0^\infty e^{-\frac{1}{2}u^2}\ln u du=-\frac{1}{4}(\gamma+\ln 2)\sqrt{2\pi}.
\end{equation}
Combining (\ref{int4}), (\ref{int3}), (\ref{int2}), (\ref{int1}) and substituting into (\ref{Hfa}) we obtain
the formula in the assertion.
\end{proof}
For $\alpha=2$ we get the differential entropy $h(G)=\ln\sqrt{2\pi}+1/2$ of the standard Gaussian variable $G$. 

\section{Orlicz norm of the $\alpha$-normal distribution}
\label{secOrlicz}
Let us emphasize that  Weibull random variables are the model examples of random variables with $\alpha$-sub-exponential tail decay. 
We say that a random variable $X$ has the $\alpha$-sub-exponential tail decay if there exist two constant $c,C$ such that for $t\ge 0$ it holds
$$
\mathbb{P}(|X|\ge t)\le c\exp\big(-(t/C)^\alpha\big).
$$
Since 
$$
\mathbb{P}(W_{\alpha,\lambda}\ge t)= \exp\big(-(t/\lambda)^\alpha\big),
$$
the Weibull random $W_{\alpha,\lambda}$ has $\alpha$-sub-exponential tail decay with $c=1$ and $C=\lambda$.
Whereas the estimate (\ref{est1}) means that $G_\alpha$ has such a tail decay with $c=1$ and $C=2^{1/\alpha}$.  

The property of $\alpha$-sub-exponential tail decay can be equivalently expressed in terms of so-called (exponential) Orlicz norms. Recall that for any random variable $X$, $\psi_\alpha$-norm is defined by 
$$
\|X\|_{\psi_\alpha}:=\inf \big\{K>0:\; \mathbb{E}\exp(|X/K|^\alpha)\le 2\big\};
$$
according to the standard convention $\inf\emptyset=\infty$. 
We  call the above functional $\psi_\alpha$-norm but let us emphasize that only for $\alpha\ge 1$ it is a proper norm. For $0<\alpha<1$ it is  quasi-norm. It does not satisfy the triangle inequality (see Appendix A in \cite{GSS} for more details).
One can observe that $\||X|\|_{\psi_\alpha}=\|X\|_{\psi_\alpha}$ and, moreover, one can  check that, for $\alpha,\beta>0$,
$\||X|^\beta\|_{\psi_{\alpha}}=\|X\|^\beta_{\psi_{\alpha\beta}}$; see Lemma 2.3 in \cite{Zaj}.

Since the closed form of the moment generating function of random variable  $G^2$ is known, we can calculate the $\psi_\alpha$-norm of $\alpha$-normal random variable $G_\alpha$.  
Since $G^2$ has $\chi^2_1$-distribution with one degree of freedom whose moment generating function is $\mathbb{E}\exp(sG)=(1-2s)^{-1/2}$ for $s<1/2$, we get 
$$
\mathbb{E}\exp(G^2/K^2)=(1-2/K^2)^{-1/2},
$$
which is less or equal $2$ if $K\ge \sqrt{8/3}$. It gives that $\|G\|_{\psi_2}=\sqrt{8/3}$. The $\psi_2$-norm of $|G|$ is equal to $\psi_2$-norm of $G$. 
By Lemma 2.3 in \cite{Zaj} and the definition of $\alpha$-normal distribution we get
$$
\|G_\alpha\|_{\psi_\alpha}=\||G|^{2/\alpha}\|_{\psi_\alpha}=\|G\|^{2/\alpha}_{\psi_2}=(8/3)^{1/\alpha}.
$$
\begin{rem}
Using the closed form of the moment generating function of the standard exponential random variable and the above mentioned definition of the two-parameter Weibull distribution, similarly as for the standard $\alpha$-Gaussian random variable, one can obtain its $\psi_\alpha$-norm    $\|W_{\alpha,\lambda}\|_{\psi_\alpha}=\lambda 2^{1/\alpha}$.
\end{rem}
\begin{rem}
Although the Weibull($\alpha,\lambda$) random variables provide model examples of random variables with $\alpha$-sub-exponential tail decay (they are model  elements of spaces generated by the $\psi_\alpha$-norms), it can nevertheless be argued that the $\alpha$-Gaussian variables should play a central role among these variables (in these spaces). 
\end{rem}

\section{Multivariate $\alpha$-normal distributions}
We define the multivariate $\alpha$-normal distribution as the meta-Gaussian distribution with $\alpha$-normal margins, i.e., as a composition of the Gauss copula with the $\alpha$-normal distributions. Recall that the Gauss copula $C^{Gauss}_\Sigma$ with the correlation matrix $\Sigma$ is defined as
$$
C^{Gauss}_\Sigma(u_1,...,u_d)={\bf \Phi}_\Sigma(\Phi^{-1}(u_1),...,\Phi^{-1}(u_d)),
$$ 
where ${\bf \Phi}_\Sigma$ is the d.f. of $\mathcal{N}({\bf 0},\Sigma)$ distribution;
see \cite[(5.9)]{MFE}  for instance.

\begin{defn}
We define the {\it joint distribution function of the multivariate $\alpha$-normal distribution} as 
$$
{\bf \Phi}_{\Sigma,\alpha}(x_1,...,x_d):=C^{Gauss}_\Sigma(\Phi_\alpha(x_1),...,\Phi_\alpha(x_d)).
$$
\end{defn}

\begin{pro}
i) The joint distribution function of the multivariate $\alpha$-normal distribution is of the form
$$
{\bf \Phi}_{\Sigma,\alpha}(x_1,...,x_d)={\bf \Phi}_\Sigma\big(\operatorname{sgn}(x_1)|x_1|^{\alpha/2},...,\operatorname{sgn}(x_d)|x_d|^{\alpha/2}\big).
$$
ii) The density function of the multivariate $\alpha$-normal distribution is
$$
\boldsymbol{\varphi}_{\Sigma,\alpha}(x_1,...,x_d)=\Big(\frac{\alpha}{2}\Big)^d\prod_{i=1}^d|x_i|^{\alpha/2-1}\boldsymbol{\varphi}_\Sigma\big(\operatorname{sgn}(x_1)|x_1|^{\alpha/2},...,\operatorname{sgn}(x_d)|x_d|^{\alpha/2}\big),
$$
where $\boldsymbol{\varphi}_\Sigma$ is the density function of $\mathcal{N}({\bf 0},\Sigma)$ distribution. 
\end{pro}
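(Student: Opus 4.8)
The plan is to treat the two parts in sequence, since part (ii) is obtained from part (i) by differentiation. For part (i), I would simply unwind the definitions. By the definition of the joint distribution function together with the definition of the Gauss copula, one has
\begin{equation*}
{\bf \Phi}_{\Sigma,\alpha}(x_1,\ldots,x_d)=C^{Gauss}_\Sigma(\Phi_\alpha(x_1),\ldots,\Phi_\alpha(x_d))={\bf \Phi}_\Sigma\big(\Phi^{-1}(\Phi_\alpha(x_1)),\ldots,\Phi^{-1}(\Phi_\alpha(x_d))\big).
\end{equation*}
Now I would invoke Proposition \ref{cdf}(i), namely $\Phi_\alpha(x)=\Phi(\operatorname{sgn}(x)|x|^{\alpha/2})$, so that for each coordinate $\Phi^{-1}(\Phi_\alpha(x_i))=\Phi^{-1}(\Phi(\operatorname{sgn}(x_i)|x_i|^{\alpha/2}))=\operatorname{sgn}(x_i)|x_i|^{\alpha/2}$, the cancellation being legitimate because $\Phi$ is a continuous strictly increasing bijection of $\mathbb{R}$ onto $(0,1)$. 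Substituting these expressions yields the claimed identity, so part (i) reduces to a direct computation.

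For part (ii), the density is the mixed $d$-th order partial derivative of the distribution function, and I would compute it from the form established in (i). Writing $g(x):=\operatorname{sgn}(x)|x|^{\alpha/2}$, part (i) gives ${\bf \Phi}_{\Sigma,\alpha}(x_1,\ldots,x_d)={\bf \Phi}_\Sigma(g(x_1),\ldots,g(x_d))$. The crucial structural observation is that the change of variables is \emph{diagonal}: the $i$-th argument $g(x_i)$ of ${\bf \Phi}_\Sigma$ depends on $x_i$ alone. Consequently differentiation with respect to $x_i$ touches only the $i$-th slot of ${\bf \Phi}_\Sigma$, and iterating the chain rule over all $d$ variables the cross terms never appear, giving
\begin{equation*}
\frac{\partial^d}{\partial x_1\cdots\partial x_d}{\bf \Phi}_\Sigma(g(x_1),\ldots,g(x_d))=\Big(\frac{\partial^d {\bf \Phi}_\Sigma}{\partial y_1\cdots\partial y_d}\Big)(g(x_1),\ldots,g(x_d))\prod_{i=1}^d g'(x_i).
\end{equation*}
Here the mixed partial of the Gaussian cdf is exactly the density, $\partial_{y_1}\cdots\partial_{y_d}{\bf \Phi}_\Sigma=\boldsymbol{\varphi}_\Sigma$, and from the computation already carried out in the proof of Proposition \ref{cdf} one has $g'(x)=\tfrac{\alpha}{2}|x|^{\alpha/2-1}$. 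Collecting the factors $\prod_{i=1}^d\tfrac{\alpha}{2}|x_i|^{\alpha/2-1}=(\alpha/2)^d\prod_{i=1}^d|x_i|^{\alpha/2-1}$ delivers the asserted formula for $\boldsymbol{\varphi}_{\Sigma,\alpha}$.

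The step that requires the most care, and which I expect to be the only genuine obstacle, is the differentiability of $g$ at the origin: the map $g(x)=\operatorname{sgn}(x)|x|^{\alpha/2}$ has derivative $\tfrac{\alpha}{2}|x|^{\alpha/2-1}$ that blows up as $x\to 0$ when $\alpha<2$ and vanishes when $\alpha>2$, so the formula for $\boldsymbol{\varphi}_{\Sigma,\alpha}$ cannot hold pointwise at any $x_i=0$. I would therefore state the identity as holding on $(\mathbb{R}\setminus\{0\})^d$, equivalently Lebesgue-almost everywhere, which is all that is needed for it to be a valid density; the coordinate hyperplanes $\{x_i=0\}$ form a null set and carry no probability mass. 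With that caveat the chain-rule factorization above is valid on the full-measure set where every $x_i\neq0$, and the proof is complete.
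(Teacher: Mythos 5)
Your proposal is correct and takes essentially the same route as the paper: part (i) by composing the Gauss copula with the marginal cdfs from Proposition \ref{cdf} and cancelling $\Phi^{-1}\circ\Phi$, and part (ii) by taking the mixed partial derivative of the resulting expression, with the diagonal chain rule producing the factor $\big(\tfrac{\alpha}{2}\big)^d\prod_{i=1}^d|x_i|^{\alpha/2-1}$. Your added caveat that the density formula should be read on $(\mathbb{R}\setminus\{0\})^d$, i.e.\ Lebesgue-almost everywhere, is a point of rigor the paper passes over silently, but it does not alter the argument.
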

\begin{proof}
By the form of $C^{Gauss}_\Sigma$ and Proposition \ref{cdf} we get forms of the multivariate $\alpha$-normal distribution  and its density function, i.e.,
\begin{eqnarray*}
{\bf \Phi}_{\Sigma,\alpha}(x_1,...,x_d)&=&{\bf \Phi}_\Sigma\Big(\Phi^{-1}\big(\Phi(\operatorname{sgn}(x_1)|x_1|^{\alpha/2}\big),...,\Phi^{-1}\big(\Phi(\operatorname{sgn}(x_d)|x_d|^{\alpha/2}\big)\Big)\\
\; &=&{\bf \Phi}_\Sigma\big(\operatorname{sgn}(x_1)|x_1|^{\alpha/2},...,\operatorname{sgn}(x_d)|x_d|^{\alpha/2}\big)
\end{eqnarray*}
and, for the second part,
\begin{eqnarray*}
\boldsymbol{\varphi}_{\Sigma,\alpha}(x_1,...,x_d)&=&\frac{\partial^d }{\partial x_1\ldots\partial x_d}
{\bf \Phi}_\Sigma\big(\operatorname{sgn}(x_1)|x_1|^{\alpha/2},...,\operatorname{sgn}(x_d)|x_d|^{\alpha/2}\big)\\
&=&\Big(\frac{\alpha}{2}\Big)^d\prod_{i=1}^{d}|x_i|^{\frac{\alpha}{2}-1}\boldsymbol{\varphi}_{\Sigma}\big(\operatorname{sgn}(x_1)|x_1|^{\frac{\alpha}{2}},\dots,\operatorname{sgn}(x_d)|x_d|^{\frac{\alpha}{2}}\big) .
\end{eqnarray*}
\end{proof}
\begin{exa}
Recall that the standard bivariate normal density function $\boldsymbol{\varphi}_\rho$ with the correlation
coefficient $\rho\in(-1,1)$ is of the form
\begin{eqnarray*}
\boldsymbol{\varphi}_\rho(x,y):=\frac{1}{2\pi\sqrt{1-\rho^2}}\exp\Big(-\frac{1}{2(1-\rho)^2}\big[x^2-2\rho xy+y^2\big]\Big).
\end{eqnarray*}
By the above proposition, the  bivariate $\alpha$-normal density function 
$\boldsymbol{\varphi}_{\rho,\alpha}$ with the 
coefficient $\rho$ takes the form
\begin{eqnarray*}
\boldsymbol{\varphi}_{\rho,\alpha}(x,y)&=&\Big(\frac{\alpha}{2}\Big)^2|xy|^{\frac{\alpha}{2}-1}
\boldsymbol{\varphi}_{\rho}\big(\operatorname{sgn}(x)|x|^{\alpha/2},\operatorname{sgn}(y)|y|^{\alpha/2}\big)\\
\;&=&\frac{\alpha^2}{8\pi\sqrt{(1-\rho^2)}}|xy|^{\frac{\alpha}{2}-1}\exp\Big(-\frac{1}{2(1-\rho)^2}\big[|x|^\alpha-2\rho \operatorname{sgn}(xy)|xy|^{\alpha/2}+|y|^\alpha\big]\Big).
\end{eqnarray*}

\end{exa}


\section{Limiting distribution} 
The Gauss copula is a continuous function. Taking into account Remark \ref{Rad} we get the weak convergence of 
the multivariate $\alpha$-normal distribution
to the meta-Gaussian distribution with Rademacher's margins as $\alpha\to\infty$.
This limiting distribution we denote by $\boldsymbol{\Phi}_{\Sigma,\infty}$. Thus
\begin{eqnarray*}
\boldsymbol{\Phi}_{\Sigma,\infty}(x_1,...,x_d) &:=&\lim\limits_{\alpha\to\infty}\boldsymbol{\Phi}_{\Sigma,\alpha}(x_1,...,x_d)=C_{\Sigma}^{Gauss}(\Phi_{\infty}(x_1),\ldots,\Phi_{\infty}(x_d)).
\end{eqnarray*}



Let ${\bf X}$ be a random vector with the cdf $\boldsymbol{\Phi}_{\Sigma,\infty}$. Then 
${\bf X}$ is a discrete random vector with $Ran\; {\bf X}=\{-1,1\}^d$.
We recall the notation for counting elements in lexicographic order. For any vectors $\boldsymbol{x}=(x_j)_{j=1}^{d}$ and $\boldsymbol{y}=(y_j)_{j=1}^{d}$, we denote by $\#\{\boldsymbol{y}\neq \boldsymbol{x}\}$  the number of indices $i$ for which $y_i \neq x_i$, i.e., 
\begin{eqnarray*}
\#\{\boldsymbol{y}\neq \boldsymbol{x}\} = \#\{i: y_i \neq x_i \text{ for } \boldsymbol{x}=(x_j)_{j=1}^{d} \text{ and } \boldsymbol{y}=(y_j)_{j=1}^{d}\}.
\end{eqnarray*}

By the inclusion-exclusion principle and the form of $\boldsymbol{\Phi}_{\Sigma,\infty}$ we immediately get the following form  of the probability mass function of ${\bf X}$.
\begin{pro}
Let $\boldsymbol{X}$ be a random vector with 
the distribution function $\boldsymbol{\Phi}_{\Sigma,\infty}$. Then the probability mass function $P_{\boldsymbol{X}}$ at $\boldsymbol{x}\in\{-1,1\}^d$ is given by
\begin{eqnarray*}
P_{\boldsymbol{X}}(\boldsymbol{x})=\sum_{\boldsymbol{y}:\boldsymbol{y}\preccurlyeq \boldsymbol{x}} (-1)^{\#\{\boldsymbol{y}\neq \boldsymbol{x}\}}\boldsymbol{\Phi}_{\Sigma,\infty}(\boldsymbol{y})
=\sum_{\boldsymbol{y}:\boldsymbol{y}\preccurlyeq \boldsymbol{x}} (-1)^{\#\{\boldsymbol{y}\neq \boldsymbol{x}\}}C_{\Sigma}^{Gauss}(\boldsymbol{\Phi}_{\infty}(\boldsymbol{y})),
\end{eqnarray*}
where $\preccurlyeq$ denotes the lexicographic order on $\{-1,1\}^d$.
\end{pro}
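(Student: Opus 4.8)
The plan is to exploit that $\boldsymbol{X}$ is a discrete vector supported on the finite lattice $\{-1,1\}^d$ and to recover its probability mass function from its cumulative distribution function by the standard device of iterated finite differences, which is exactly inclusion–exclusion (Möbius inversion) on the Boolean lattice $(\{-1,1\}^d,\preccurlyeq)$ ordered coordinatewise. The starting identity is that, for a lattice point $\boldsymbol{x}\in\{-1,1\}^d$,
$$
\boldsymbol{\Phi}_{\Sigma,\infty}(\boldsymbol{x})=\mathbb{P}(X_1\le x_1,\dots,X_d\le x_d)=\sum_{\boldsymbol{y}\preccurlyeq\boldsymbol{x}}P_{\boldsymbol{X}}(\boldsymbol{y}),
$$
because each marginal event $\{X_i\le x_i\}$ restricted to the support $\{-1,1\}$ is $\{X_i=-1\}$ when $x_i=-1$ and $\{X_i\in\{-1,1\}\}$ when $x_i=1$. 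Thus the CDF at a lattice point is the cumulative sum of the mass over the principal order ideal generated by $\boldsymbol{x}$.

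First I would invert this relation coordinate by coordinate. For each index with $x_i=1$ write $\{X_i=1\}=\{X_i\le 1\}\setminus\{X_i\le-1\}$, while for each index with $x_i=-1$ the event $\{X_i=-1\}$ already equals $\{X_i\le-1\}$. Applying the difference operator in each of the former coordinates and the identity in each of the latter, and expanding the resulting product of operators, yields
$$
P_{\boldsymbol{X}}(\boldsymbol{x})=\sum_{S\subseteq\{i:\,x_i=1\}}(-1)^{|S|}\,\boldsymbol{\Phi}_{\Sigma,\infty}(\boldsymbol{y}_S),
$$
where $\boldsymbol{y}_S$ is obtained from $\boldsymbol{x}$ by lowering the coordinates in $S$ from $1$ to $-1$.

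Next I would match this with the asserted formula. The vectors $\boldsymbol{y}_S$ range exactly over the $\boldsymbol{y}\preccurlyeq\boldsymbol{x}$ (a coordinate can be lowered to $-1$ only where $x_i=1$), the number of flipped coordinates is $|S|=\#\{\boldsymbol{y}_S\neq\boldsymbol{x}\}$, so the sign $(-1)^{|S|}$ is precisely $(-1)^{\#\{\boldsymbol{y}\neq\boldsymbol{x}\}}$; this is nothing but the Möbius function of the Boolean lattice. Finally, substituting the defining identity $\boldsymbol{\Phi}_{\Sigma,\infty}(\boldsymbol{y})=C_\Sigma^{Gauss}(\boldsymbol{\Phi}_\infty(\boldsymbol{y}))$ produces the second displayed expression.

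The point demanding the most care is the very first identity: one must justify that the value of the limiting CDF at a boundary lattice point really encodes the closed events $\{X_i\le x_i\}$, i.e.\ that the atom sitting at $x_i$ is included, which rests on right-continuity of the CDF together with the discreteness established in the preceding remark. A second, purely bookkeeping point worth checking is that the order governing the summation is the coordinatewise (product) order on $\{-1,1\}^d$, for it is only for that order that the index set equals $\{\boldsymbol{y}_S\}$ and the Möbius weights $(-1)^{\#\{\boldsymbol{y}\neq\boldsymbol{x}\}}$ come out correctly; the inclusion–exclusion cancellation fails if $\boldsymbol{y}$ is allowed to range over a strictly larger collection.
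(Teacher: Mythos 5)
Your proof is correct and rests on the same principle as the paper's: inclusion--exclusion over the vertices of the hypercube below $\boldsymbol{x}$. The difference is one of rigor rather than route: the paper expands $\boldsymbol{\Phi}_{\Sigma,\infty}(\boldsymbol{x})$ recursively, groups terms by the number of flipped coordinates, and defers the cancellation pattern to its $2$- and $3$-dimensional examples, whereas you make the argument self-contained by factorizing the event $\{\boldsymbol{X}=\boldsymbol{x}\}$ coordinatewise --- $\{X_i=1\}=\{X_i\le 1\}\setminus\{X_i\le -1\}$ and $\{X_i=-1\}=\{X_i\le -1\}$ --- and expanding the resulting product of difference operators, which produces the index set $\{\boldsymbol{y}_S\}$ and the signs $(-1)^{|S|}$ in one stroke. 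Your final bookkeeping point is in fact a correction to the statement, not a pedantic aside: the formula holds only for the coordinatewise (product) order, while the proposition says ``lexicographic order.'' For $\boldsymbol{x}=(1,-1)$ the lexicographic ideal additionally contains $(-1,1)$, whose contribution $(-1)^2\,C^{Gauss}_{\Sigma}\big(\tfrac{1}{2},1\big)=\tfrac{1}{2}$ would spoil the identity and contradict the paper's own bivariate example; the coordinatewise order you use is the one the paper's proof and examples implicitly employ.
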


\begin{exa}
Let a random vector $(X,Y)$ has the cdf  $\boldsymbol{\Phi}_{\rho,\infty}$, where $\rho$ is a correlation coefficient of the Gauss copula $C_{\rho}^{Gauss}$. Then
\begin{eqnarray*}
\boldsymbol{\Phi}_{\rho,\infty}(x,y)=C_{\rho}^{Gauss}(\Phi_{\infty}(x),\Phi_{\infty}(y)).
\end{eqnarray*}
Let us note that $\Phi_{\infty}$ takes only three values: $0$, $1/2$ and $1$. By the definition of 
a copula and using its Frechet bounds we have  
$$
C_{\rho}^{Gauss}(1,1)=1,\;
C_{\rho}^{Gauss}(\frac{1}{2},1)=C_{\rho}^{Gauss}(1,\frac{1}{2})=\frac{1}{2}
$$ 
and 
$$
C_{\rho}^{Gauss}(0,0)=C_{\rho}^{Gauss}(0,\frac{1}{2})=C_{\rho}^{Gauss}(\frac{1}{2},0)=0.
$$
The probability mass function of $(X,Y)$ is concentrated at points $(-1,-1)$, $(1,-1)$, $(-1,1)$ and $(1,1)$.
Successively we get
\begin{eqnarray*}
P_{XY}(-1,-1)&=&\mathbb{P}(X=-1,Y=-1)=\boldsymbol{\Phi}_{\rho,\infty}(-1,-1)\\
&=&C_{\rho}^{Gauss}(\Phi_{\infty}(-1),\Phi_{\infty}(-1))=C_{\rho}^{Gauss}\Big(\frac{1}{2},\frac{1}{2}\Big),
\end{eqnarray*}
\begin{eqnarray*}
P_{XY}(1,-1)&=&\mathbb{P}(X=1,Y=-1)=\boldsymbol{\Phi}_{\rho,\infty}(1,-1)-\boldsymbol{\Phi}_{\rho,\infty}(-1,-1)\\&=&C_{\rho}^{Gauss}(\Phi_{\infty}(1),\Phi_{\infty}(-1))-C_{\rho}^{Gauss}(\Phi_{\infty}(-1),\Phi_{\infty}(-1))\\&=&
C_{\rho}^{Gauss}\Big(1,\frac{1}{2}\Big)-C_{\rho}^{Gauss}\Big(\frac{1}{2},\frac{1}{2}\Big)\\
&=&\frac{1}{2}-C_{\rho}^{Gauss}\Big(\frac{1}{2},\frac{1}{2}\Big).
\end{eqnarray*}
Similarly
\begin{eqnarray*}
P_{XY}(-1,1)&=&\mathbb{P}(X=-1,Y=1)=\boldsymbol{\Phi}_{\rho,\infty}(-1,1)-\boldsymbol{\Phi}_{\rho,\infty}(-1,-1)\\&=&C_{\rho}^{Gauss}(\Phi_{\infty}(-1),\Phi_{\infty}(1))-C_{\rho}^{Gauss}(\Phi_{\infty}(-1),\Phi_{\infty}(-1))\\
&=&C_{\rho}^{Gauss}\Big(\frac{1}{2},1\Big)-C_{\rho}^{Gauss}\Big(\frac{1}{2},\frac{1}{2}\Big)\\
&=&\frac{1}{2}-C_{\rho}^{Gauss}\Big(\frac{1}{2},\frac{1}{2}\Big)
\end{eqnarray*}
and finally
\begin{eqnarray*}
P_{XY}(1,1)&=&\mathbb{P}(X=1,Y=1)=\boldsymbol{\Phi}_{\rho,\infty}(1,1)-\boldsymbol{\Phi}_{\rho,\infty}(1,-1)\\
&&-\boldsymbol{\Phi}_{\rho,\infty}(-1,1)+\boldsymbol{\Phi}_{\rho,\infty}(-1,-1)\\
&=&C_{\rho}^{Gauss}(\Phi_{\infty}(1),\Phi_{\infty}(1))-C_{\rho}^{Gauss}(\Phi_{\infty}(1),\Phi_{\infty}(-1))\\
&&-C_{\rho}^{Gauss}(\Phi_{\infty}(-1),\Phi_{\infty}(1))+C_{\rho}^{Gauss}(\Phi_{\infty}(-1),\Phi_{\infty}(-1))\\
&=&C_{\rho}^{Gauss}(1,1)-C_{\rho}^{Gauss}\Big(1,\frac{1}{2}\Big)-C_{\rho}^{Gauss}\Big(\frac{1}{2},1\Big)\\
&&+C_{\rho}^{Gauss}\Big(\frac{1}{2},\frac{1}{2}\Big)=1-\frac{1}{2}-\frac{1}{2}+C_{\rho}^{Gauss}\Big(\frac{1}{2},\frac{1}{2}\Big)\\
&=&C_{\rho}^{Gauss}\Big(\frac{1}{2},\frac{1}{2}\Big)=1-P_{XY}(-1,-1)-P_{XY}(1,-1)-P_{XY}(-1,1).
\end{eqnarray*}
Summarizing
$$
P_{XY}(-1,-1)=P_{XY}(1,1)=C_{\rho}^{Gauss}\Big(\frac{1}{2},\frac{1}{2}\Big)
$$
and
$$
P_{XY}(1,-1)=P_{XY}(-1,1)=\frac{1}{2}-C_{\rho}^{Gauss}\Big(\frac{1}{2},\frac{1}{2}\Big).
$$
\end{exa}

\end{document}